\renewcommand*{\backref}[1]{}
\renewcommand*{\backrefalt}[4]{%
    \ifcase #1 (Not cited.)%
    \or        (Cited on page~#2.)%
    \else      (Cited on pages~#2.)%
    \fi}
\newtheorem{proposition}{Proposition}
\newtheorem{theorem}{Theorem}
\begin{document}

\title[What is  an internal groupoid?]{What is  an internal groupoid?}


\author{N. Martins-Ferreira}
\address[Nelson Martins-Ferreira]{Instituto Politécnico de Leiria, Leiria, Portugal}
\thanks{ }
\email{martins.ferreira@ipleiria.pt}

\begin{abstract}
An answer to the question investigated in this paper brings a new characterization of internal groupoids such that: (a) it holds even when finite limits are not assumed to exist; (b) it is a full subcategory of the category of invo\-lutive-2-links, that is, a category whose objects are morphisms equipped with a pair of interlinked involutions. This result highlights the fact that even thought internal groupoids are internal categories equipped with an involution, they can equivalently be seen as tri-graphs with an involution. Moreover, the structure of a tri-graph with an involution can be further contracted into a simpler structure consisting of one morphism with two interlinked involutions. This approach highly contrasts with the one where groupoids are seen as reflexive graphs on which a multiplicative structure is defined with inverses.

\keywords{Internal categories, internal groupoids, pullbacks, pushouts, reflexive graph, diedral group, multiplicative structure, involutive link.}


\end{abstract}

\maketitle

\date{Received: date / Accepted: date}

\section{Introduction}
\label{intro}

It is needless to say that internal groupoids are an important class of internal categories which have been thoroughly investigated on a diversity of subtopics in category theory with relevant applications to other areas such as in particular algebra and geometry. A celebrated paper  by D. Bourn~\cite{Bourn91} has certainly contributed  to improve and enrich the field on the algebraic side with the subsequent aid of many researchers who became interested in the remarkable properties of the fibration of points and its classifying properties (\cite{Bourn13} see also \cite{BB}). On the side of geometry, the book \emph{Topology and Groupoids} by R. Brown has been influential in continuing the ideas and work of Grothendieck \cite{Grothendieck} and Ehresmann \cite{Ehresmann} on differentiable groupoids and on the fundamental groupoid of a space \cite{Brown1,Brown}. Although an internal groupoid is an instance of an internal category, Brandt \cite{Brandt} predates Eilenberg and Mac Lane~\cite{Eilenberg-MacLane} in delineating an axiomatic portrait of a (connected) groupoid (\cite{Voight} Remark 19.3.12). Lie groupoids are internal groupoids in the category of smooth manifolds while topological groupoids are internal groupoids in the category of topological spaces. Thus, groupoids are a crossroad between algebra and geometry by virtue of category theory. For a survey see e.g. \cite{Higgins,Weinstein}.  

It is well known that on some grounds the structure of an internal groupoid simplifies itself due to a coincidence between property and structure. For example, groupoids internal to the category of abelian groups (or any abelian category) are nothing but group homomorphisms whereas groupoids internal to the category of groups (or any semiabelian category) are the same as internal crossed-modules~\cite{Janelidze}. One  draw-back of working with a category of internal groupoids is the complexity of its plain structure. It involves a reflexive graph, together with a morphism expressing multiplication (or composition) which requires an appropriate pullback as domain (making composable pairs of arrows meaningful), it requires an involution providing inverses, as well as several suitable compatibility conditions, not to mention the iterated pullback necessary to express associativity (Section \ref{sec: groupoids}). For that reason various authors have restricted the study of internal groupoids to situations in which the plain structure is significantly reduced due to good properties displayed by the ambient category. That is the case for example of Mal'tsev and weakly Mal'tsev categories in which the category of internal groupoids is a full subcategory of the category of reflexive graphs (see e.g. \cite{NMF.20a}, see also \cite{NMF.17}, Section 3). Although the algebraically flavoured approach (in which a groupoid is seen as a set equipped with a partial multiplication) is not satisfactory from the point of view of internal structures, it does suggest a simplification on the structure as proposed by Brandt \cite{Brandt}. 

The purpose of this paper is to show that independently of its ambient, the category of internal groupoids is always a full subcategory of the category whose objects are morphisms equipped with a pair of interlinked involutions. This category, in some respects, is even simpler than the category of reflexive graphs. It will be called the category of involutive-2-links (Section \ref{sec: inv2links}).

Throughout this paper we work on an ambient category in which no limits nor colimits are assumed to exist (a good example to bear in mind is the cateogry of smooth manifolds). In particular this means that pullbacks and pushouts are to be understood as properties of commutative squares. When a square is at the same time a pullback and a pushout we call it an \emph{exact square} (such squares are also called bicartesian squares, Dolittle diagrams or pulation squares \cite{Ban}). A span which can be completed into an exact square is called an exact span. In the same way, a cospan which can be completed into an exact square is called an exact cospan. A digraph (i.e. a pair of parallel morphisms) which is at the same time an exact span and an exact cospan is called bi-exact. This notion will be needed in the main result (Theorem \ref{thm: main}).



\section{The structure of an internal groupoid}\label{sec: groupoids}

An internal groupoid can be presented as a structure of the form 
\begin{equation}\label{diag: internal groupoid}\xymatrix{C_2 \ar@<1.5ex>[r]^{\pi_2}\ar@<-1.5ex>[r]_{\pi_1}\ar[r]|{m} & C_1 \ar@(ur,ul)[]_{i}  \ar@<1ex>[r]^{d} \ar@<-1ex>[r]_{c} & C_0 \ar[l]|{e} }
\end{equation}
such that
\begin{eqnarray}
de = 1_{C_1} =ce \\
d m = d \pi_2,\quad c m = c \pi_1,\quad d\pi_1=c\pi_2\\
di=c,\quad ci=d, \quad i^2=1_{C_1},\quad ie=e
\end{eqnarray}
and satisfying the following further properties:
\begin{enumerate}
\item the commutative square 
\begin{equation}\label{diag: d c square}
\vcenter{
\xymatrix{C_2 \ar[r]^{\pi_2}\ar[d]_{\pi_1} & C_1 \ar[d]^{c}\\
 C_1\ar[r]^{d} & C_0}}
\end{equation}
is a pullback square;
\item $m\langle 1_{C_1},ed\rangle=1_{C_1}=m\langle ec,1_{C_1}\rangle$;
\item $m\langle 1_{C_1},i\rangle=ec,\quad m\langle i,1_{C_1}\rangle=ed$;
\item the cospan $\xymatrix{C_2\ar[r]^{d\pi_2} & C_0 & C_1 \ar[l]_{c}}$ can be completed into a pullback square
\begin{equation}\label{diag: dpi_2 c square}
\vcenter{
\xymatrix{C_3 \ar[r]^{p_2}\ar[d]_{p_1} & C_1 \ar[d]^{c}\\
 C_2\ar[r]^{d\pi_2} & C_0}}
\end{equation}

\item $m(1\times m)=m(m\times 1)$, where $(1\times m),(m\times 1)\colon C_3\to C_2$ are morphisms uniquely determined as
\begin{align*}
    \pi_2(m\times 1)=p_2\\
    \pi_1(m\times 1)=m\\
    \pi_2(1\times m)=m\langle \pi_2p_1,p_2 \rangle\\
    \pi_1(1\times m)=\pi_2.
\end{align*}
\end{enumerate}

There is, clearly, a redundancy in this structure and its properties. For example, as soon as the commutative square $d\pi_1=c\pi_2$, displayed as  $(\ref{diag: d c square})$, is required to be a pullback, the morphisms $\pi_1$ and $\pi_2$ are uniquely determined up to isomorphism. A remarkable observation due to D.~Bourn~\cite{Bourn87} shows that the existence of a morphism $i\colon C_1\to C_1$ such that $di=c$, $i^2=1_{C_1}$, $ie=e$ and $m\langle 1_{C_1},i\rangle=ec$, $m\langle i,1_{C_1}\rangle=ed$ can be interpreted as a property. Indeed, the existence of the involution morphism $i$ is equivalent to the property that the commutative square $dm=d\pi_2$ is a pullback. This is of course consistent with the general fact that inverses, when they exist, are uniquely determined. However, it raises the question of finding a minimal structure that can carry enough information to encode the notion of internal groupoid.

With a few exceptions (e.g. \cite{BournJanel98,Janelidze91}), most papers so far  have considered the case in which pullbacks are available as a canonical construction and describe an internal groupoid as a structure of the form 
\begin{equation}\label{diag: multiplicative graph}\xymatrix{C_2 \ar[r]^{m} & C_1 \ar@<1ex>[r]^{d} \ar@<-1ex>[r]_{c} & C_0 \ar[l]|{e} }
\end{equation}
in which the morphisms $\pi_1$ and $\pi_2$ are canonically obtained by requiring the object $C_2$ to be the result of taking the pullback of $c$ along $d$ as displayed in $(\ref{diag: d c square})$.  If the involution morphism $i$ is explicitly needed then it is recovered from the pullback square $dm=d\pi_2$.

 Much work has been done in studying internal groupoids focused on their underlying reflexive graphs. This is because in some algebraic contexts such as Mal'tsev or weakly Mal'tsev categories \cite{NMF.20a}  the morphism $m$ is uniquely determined provided it exists.

The purpose of this paper is to show that a simpler description is equally possible in general categories if we shift the attention from the underlying reflexive graph to the underlying multiplicative structure.

We will show that the relevant information is enclosed in the tri-graph with involution 
\begin{equation}\label{diag: trigraph with involution}\xymatrix{C_2 \ar@<1.5ex>[r]^{\pi_2}\ar@<-1.5ex>[r]_{\pi_1}\ar[r]|{m} & C_1 \ar@(ur,rd)[]^{i} }
\end{equation}
from which the square $(\ref{diag: d c square})$ is obtained by taking the pushout of $\pi_1$ and $\pi_2$. In fact, we will see that this tri-graph can be further contracted into a single morphism $m\colon C_2\to C_1$ equipped with a pair of involutions $\theta,\varphi\colon C_2\to C_2$ satisfying the  condition $\theta\varphi\theta=\varphi\theta\varphi$. Note that the subgroup generated by $\theta$ and $\varphi$ is the Diedral group of order 6.

The structure of a \emph{link} \cite{SI7.2} was introduced to model planar curves and it consists of a morphism $f\colon A\to B$ equipped with an endomorphism  $\alpha\colon A\to A$ with no further conditions (even tough $\alpha$ is often required to be an isomorphism). When $\alpha$ is an involution we refer to it as an \emph{involutive-link}. For example, the domain morphism $d\colon C_1\to C_0$ together with the involution morphism $i$ is an involutive-link. It is thus appropriate to speak of an \emph{involutive-2-link} when we have a pair of involutions $(\theta,\varphi)$ which are interlinked by the formula $\theta\varphi\theta=\varphi\theta\varphi$. The two involutions $\theta$ and $\varphi$ that we will consider are determined (in a unique way) from the structure  displayed in $(\ref{diag: trigraph with involution})$ by equations $(\ref{eq: theta and varphi 1})$--$(\ref{eq: theta and varphi 3})$ below.
If we write $m(x,y)$ as $xy$ and $i(x)$ as $x^{-1}$ then $\theta(x,y)=(x^{-1},xy)$ and $\varphi(x,y)=(xy,y^{-1})$, assuming of course that $C_2$ is the pullback of $d$ and $c$ as illustrated in the structure  of an internal groupoid displayed in $(\ref{diag: internal groupoid})$. A list of examples is provided in Section \ref{sec: examples}.

The two main observations of this paper are (a) the category of internal groupoids is a full subcategory of the category of involutive-2-links and (b) an involutive-2-link is an internal groupoid if and only if the conditions of Theorem \ref{thm: main} are satisfied. Moreover, straightforward generalizations of these results are expected in higher dimensional categorical structures such as Loday's cat-$n$-groups \cite{Loday}. Furthermore, applications to the categorical Galois theory of G. Janelidze \cite{Janelidze91} are expected as well.  

\section{The category of involutive-2-links}\label{sec: inv2links}

The category of involutive-2-links is the category whose objects are triples $(\theta,\varphi,m)$ in which $m\colon A\to B$ is a morphism while $\theta,\varphi\colon A\to A$ are involutions, i.e., $\theta^2=\varphi^2=1_A$, that satisfy the interlinked condition  $\theta\varphi\theta=\varphi\theta\varphi$. Moreover, for convenience, we will restrict our attention to the triples $(\theta,\varphi,m)$ with the property that the three parallel morphisms $m$, $m\theta$ and $m\varphi$ are jointly monomorphic. This means that any morphism with codomain $A$ is uniquely determined as soon as the result of precomposing it with  $m$, $m\theta$ and $m\varphi$ is known. This is clearly the case if at least one of the three pairs of morphisms $(m,m\theta)$, $(m,m\varphi)$ or $(m\varphi,m\theta)$ are obtained as pullback squares, which will be the case. Indeed, as it will be apparent later on, the triple $(\theta,\varphi,m)$ is used to recover the tri-graph displayed in  (\ref{diag: trigraph with involution}) by setting $\pi_1=m\varphi$ and $\pi_2=m\theta$.

A morphism in the category of involutive-2-links, say from a triple $(\theta,\varphi,m\colon A\to B)$ to a triple $(\theta',\varphi',m'\colon A'\to B')$, consists of a single morphism $f\colon B\to B'$ with the property that there exists a morphism $\Bar{f}\colon A\to A'$ such that 
\begin{align}
    m'\Bar{f}=fm\label{eq: conditions on f bar 1}\\
    \theta'\Bar{f}=\Bar{f}\theta \label{eq: conditions on f bar 2}\\
    \varphi'\Bar{f}=\Bar{f}\varphi.\label{eq: conditions on f bar 3}
\end{align}

It is worth noting that the requirement on the triple $(\theta',\varphi',m')$ asserting that the three parallel morphisms
\begin{equation}\label{diag: three parallel morphisms}\xymatrix{A' \ar@<1.5ex>[r]^{m'\theta'}\ar@<-1.5ex>[r]_{m'\varphi'}\ar[r]|{m'} & B' }
\end{equation}
are jointly monomorphic has the effect of uniquely determining the morphism $\Bar{f}$ as
\begin{align*}
    m'\Bar{f}=fm\\
    m'\theta'\Bar{f}=fm\theta\\
    m'\varphi'\Bar{f}=fm\varphi.
\end{align*}

This restriction on involutive-2-links (of being jointly monomorphic) gives rise to a fully faithful functor that associates a triple $(\theta,\varphi,m)$ to every internal groupoid such as the one displayed in $(\ref{diag: trigraph with involution})$, according to the formulas:
\begin{align}
    m\varphi=\pi_1,\quad m\theta=\pi_2\label{eq: theta and varphi 1}\\
    \pi_1\varphi= m,\quad \pi_1\theta=i\pi_1\label{eq: theta and varphi 2}\\
    \pi_2\varphi=i\pi_2,\quad \pi_2\theta= m.\label{eq: theta and varphi 3}
\end{align}
Let us observe that even when $i$ is not made explicit, the two involutions $\theta$ and $\varphi$ are still uniquely determined because the object  $C_2$ is not only the pullback of $d$ and $c$ but also the kernel pair of $d$ and the kernel pair of $c$ and hence both pairs $(m,\pi_1)$ and $(m,\pi_2)$ are in particular jointly monomorphic.

\begin{proposition}\label{thm: the functor}
There exists a fully faithful functor from the category of internal groupoids to the category of involutive-2-links. The functor takes an internal groupoid such as the one displayed in $(\ref{diag: internal groupoid})$ and converts it into the triple $(\theta,\varphi,m)$ in which $m\colon C_2\to C_1$ is the morphism carrying the multiplicative structure of the groupoid while the endomorphisms $\theta,\varphi\colon C_2\to C_2$ are determined as $\theta=\langle i\pi_1,m\rangle$ and $\varphi=\langle m,i\pi_2\rangle$. Taking into account that $C_2$ is the pullback of $d$ and $c$. 
\end{proposition}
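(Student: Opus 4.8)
The plan is to verify three things in turn: that the assignment on objects is well-defined (i.e. $\theta$ and $\varphi$ really are interlinked involutions and $(m,m\theta,m\varphi)$ is jointly monomorphic), that the assignment extends to a functor, and that this functor is fully faithful. For the object part, I would first unwind the definitions $\theta=\langle i\pi_1,m\rangle$ and $\varphi=\langle m,i\pi_2\rangle$, noting that these pairings make sense precisely because $C_2$ is the pullback of $d$ and $c$: one checks $d(i\pi_1)=c\pi_1=c m$ wait — rather $c(i\pi_1)=d\pi_1=c\pi_2=dm$, so $\langle i\pi_1,m\rangle$ is a legitimate map into the pullback, and similarly for $\varphi$. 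Then $\theta^2=1$ follows from $\pi_1\theta=i\pi_1$, $\pi_2\theta=m$ together with $i^2=1$, the pullback laws, and the unit/inverse identities (2) and (3) of Section~\ref{sec: groupoids}; the computation for $\varphi^2=1$ is symmetric. The joint monomorphy of $(m,m\theta,m\varphi)=(m,\pi_2,\pi_1)$ is immediate since $(m,\pi_1)$ is already jointly monic (it is the kernel pair of $d$), as observed just before the statement.

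The crux is the interlinked identity $\theta\varphi\theta=\varphi\theta\varphi$. Since $(m,\pi_1,\pi_2)$ is jointly monic, it suffices to show that composing either side with each of $m$, $\pi_1$, $\pi_2$ gives the same result. Using the ``multiplicative'' shorthand from the excerpt, $\theta(x,y)=(x^{-1},xy)$ and $\varphi(x,y)=(xy,y^{-1})$, so both $\theta\varphi\theta$ and $\varphi\theta\varphi$ should send $(x,y)$ to $((xy)^{-1}, x, ?)$-type data; concretely one computes $\varphi\theta(x,y)=\varphi(x^{-1},xy)=(x^{-1}(xy),(xy)^{-1})=(y,(xy)^{-1})$ using associativity and the inverse laws, then $\theta\varphi\theta(x,y)=\theta(y,(xy)^{-1})=(y^{-1},y(xy)^{-1})$; similarly $\theta\varphi(x,y)=\theta(xy,y^{-1})=((xy)^{-1},(xy)y^{-1})=((xy)^{-1},x)$ and then $\varphi\theta\varphi(x,y)=\varphi((xy)^{-1},x)=((xy)^{-1}x, x^{-1})$, and one checks these two agree. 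The rigorous version replaces each informal step by the corresponding diagram identity: projecting with $\pi_1$, $\pi_2$ turns the claim into equalities among composites built from $m$, $i$, $e$, $d$, $c$ that follow from property~(5) (associativity $m(1\times m)=m(m\times 1)$), property~(2), property~(3), and the equations $di=c$, $ci=d$, $i^2=1$. This bookkeeping — translating the three-fold composites through the pullback projections and invoking associativity at the right moment — is the main obstacle; it is routine but requires care about which projection exposes which associativity instance.

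For functoriality, given a morphism $f\colon C_0\to C_0'$ of internal groupoids in the usual sense (a triple $(f_2,f_1,f_0)$ commuting with all structure maps, in particular $f_1 m = m' f_2$, $i' f_1 = f_1 i$, $\pi_j' f_2 = f_1 \pi_j$), I would take the induced morphism of involutive-2-links to be $f_1\colon C_1\to C_1'$ with witness $\bar f_1 = f_2$. Conditions (\ref{eq: conditions on f bar 1})--(\ref{eq: conditions on f bar 3}) then read $m' f_2 = f_1 m$, $\theta' f_2 = f_2\theta$, $\varphi' f_2 = f_2\varphi$; the first is part of the data, and the latter two follow by post-composing with the jointly monic family $(m',\pi_1',\pi_2')$ and using $\pi_j' f_2 = f_1\pi_j$, $i' f_1 = f_1 i$ together with the defining equations (\ref{eq: theta and varphi 1})--(\ref{eq: theta and varphi 3}). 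Identities and composites are clearly preserved since everything is determined component-wise.

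Finally, faithfulness is clear: a groupoid morphism is determined by $f_1$ because $f_0 = d' f_1 e$ (equivalently $f_0$ is forced by $f_1 d = d' f_1$ together with $de=1$) wait, more simply $f_0 d = d' f_1$ and $d$ is split epi via $e$, and $f_2$ is forced by joint monomorphy of $(m,\pi_1,\pi_2)$ since $m f_2 = f_1 m$, $\pi_j f_2 = f_1\pi_j$. For fullness, suppose $g\colon C_1\to C_1'$ underlies a morphism of involutive-2-links, i.e. there is $\bar g\colon C_2\to C_2'$ with $m'\bar g = g m$, $\theta'\bar g=\bar g\theta$, $\varphi'\bar g=\bar g\varphi$. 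I would recover $g_0\colon C_0\to C_0'$ as $g_0 = d' g e$ and must check $g_1:=g$, $g_2:=\bar g$, $g_0$ form a groupoid morphism: that $\bar g$ respects $\pi_1,\pi_2,m$ follows from $\pi_1'=m'\varphi'$, $\pi_2'=m'\theta'$ (eq.~(\ref{eq: theta and varphi 1})) and the three witness conditions; that $g$ respects $d,c,e$ and $i$ then follows from the fact — established in the main development of the paper — that $d,c$ arise as the pushout of $\pi_1,\pi_2$ and $i,e$ are recoverable from $(m,\pi_1,\pi_2)$, so compatibility on $C_2$ propagates to $C_1$ and $C_0$. I would cite the relevant reconstruction lemmas once the paper has proved them; the only subtlety is that fullness genuinely uses the jointly-monic (indeed bi-exact) hypothesis to pin down $\bar g$ and hence to transport the structure, which is exactly why the restriction was imposed on the category of involutive-2-links.
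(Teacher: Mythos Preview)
Your proposal is correct and structurally close to the paper's proof; you are more explicit on the involution and interlinking checks that the paper dismisses as ``routine''. One small slip: $(m,\pi_1)$ is the kernel pair of $c$, not of $d$ (since $cm=c\pi_1$); it is $(m,\pi_2)$ that is the kernel pair of $d$.

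The fullness argument is where you diverge from the paper. You propose to manufacture $g_0$ via the pushout property of the square $d\pi_1=c\pi_2$ and then recover compatibility with $i,e$ from ``reconstruction lemmas'' proved later. The paper instead stays entirely within the pullback description available at this point: from $\theta'\bar f=\bar f\theta$ together with $\pi_1\theta=i\pi_1$ one first extracts $i'f=fi$ (precompose with the split epi $\pi_1$), and then, since $(m',\pi'_1)$ and $(m',\pi'_2)$ are jointly monic (kernel pairs of $c'$ and $d'$), one verifies directly that $\bar f\langle 1,ed\rangle=\langle 1,e'd'\rangle f$ and $\bar f\langle ec,1\rangle=\langle e'c',1\rangle f$. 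These two section equations yield $f_0$ together with $f_0d=d'f$, $f_0c=c'f$, and $fe=e'f_0$ without ever invoking a pushout. Your route is conceptually cleaner but carries a forward reference to the bi-exactness result; the paper's route is self-contained here. Note in particular that you do not need any reconstruction machinery to obtain $i'g=gi$: it falls out of the $\theta$-compatibility in one line, and this is worth isolating first rather than leaving it bundled into ``$i$ is recoverable from $(m,\pi_1,\pi_2)$''.
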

\begin{proof}
In some sense the functor keeps the underlying multiplicative structuring morphism $m\colon C_2\to C_1$ and contracts the relevant information displayed in $(\ref{diag: trigraph with involution})$ as two endomorphisms, $\theta$ and $\varphi$. It is thus clear that the tri-graph
\begin{equation}\label{diag: tri-graph m etc}\xymatrix{C_2 \ar@<1.5ex>[r]^{m\theta=\pi_2}\ar@<-1.5ex>[r]_{m\varphi=\pi_1}\ar[r]|{m} & C_1 }
\end{equation}
is jointly monomorphic. In fact it is much more, as $(\pi_1,\pi_2)$ is obtained by pullback, while $(m,\pi_2)$ and $(\pi_1,m)$ are the kernel pairs of the morphisms $d$ and $c$, respectively. It is routine to check that $\theta$ and $\varphi$ are involutions and that the interlinked condition  $\theta\varphi\theta=\varphi\theta\varphi$ is satisfied. This also explains why the functor is faithful and well defined on morphisms.
To prove that it is full, let us consider two internal groupoids with the relevant tri-graph structure and involution as illustrated in diagram  $(\ref{diag: trigraph with involution})$, together with a morphism $f\colon C_1\to C'_1$ such that there exists a (unique) $\Bar{f}\colon C_2\to C'_2$ 
satisfying the three conditions $(\ref{eq: conditions on f bar 1})$--$(\ref{eq: conditions on f bar 3})$ with $\theta,\theta',\varphi,\varphi'$ obtained as in $(\ref{eq: theta and varphi 1})$--$(\ref{eq: theta and varphi 3})$, as illustrated.
\begin{equation}\label{diag: tri-graph f}\xymatrix{C_2\ar@{-->}[d]_{\bar{f}} \ar@<1.5ex>[r]^{m\theta=\pi_2}\ar@<-1.5ex>[r]_{m\varphi=\pi_1}\ar[r]|{m} & C_1\ar[d]^{f} \ar@(ur,rd)[]^{i}\\
C'_2 \ar@<1.5ex>[r]^{m'\theta'=\pi'_2}\ar@<-1.5ex>[r]_{m'\varphi'=\pi'_1}\ar[r]|{m'} & C'_1 \ar@(ur,rd)[]^{i'}}
\end{equation}
Under these conditions, it follows that  $i'f=fi$ which is then used in collaboration with the fact that $C_2$ and $C'_2$ are pullbacks to assert that 
\begin{align}
    \bar{f}\langle 1,ed \rangle=\langle 1,ed \rangle f\label{eq: e1 cartesian}\\
    \bar{f}\langle ec,1 \rangle=\langle ec,1 \rangle f\label{eq: e2 cartesian}
\end{align}
from which we obtain the complete diagram 
\begin{equation}\label{diag: tri-graph f complete}\xymatrix{C_2\ar@{-->}[d]_{\bar{f}} \ar@<1.5ex>[r]^{\pi_2}\ar@<-1.5ex>[r]_{\pi_1}\ar[r]|{m} & C_1\ar[d]^{f}  \ar@(ur,ul)[]_{i} 
 \ar@<1ex>[r]^{d} \ar@<-1ex>[r]_{c} & C_0 \ar[l]|{e} \ar@{-->}[d]^{f_0}
\\
C'_2 \ar@<1.5ex>[r]^{\pi'_2}\ar@<-1.5ex>[r]_{\pi'_1}\ar[r]|{m'} & C'_1  \ar@(dr,dl)[]_{i'} \ar@<1ex>[r]^{d'} \ar@<-1ex>[r]_{c'} & C'_0 \ar[l]|{e'} }
\end{equation}
with $f_0$ being uniquely determined by the property that $f_0 d=d'f$ and $f_0 c=c' f$. Moreover, $fe=e'f_0$ follows from one of the assertions $(\ref{eq: e1 cartesian})$ or $(\ref{eq: e2 cartesian})$.
\end{proof}

\section{Examples}\label{sec: examples}

The examples are provided with the notation of sets and maps but are supposed to be defined in any category where the required constructions are possible. In each case we provide a set $C_1$, to be interpreted as the set of arrows, a set $C_2$, interpreted as the set of composable arrows, one function $m\colon C_2\to C_1$ and two involutions $\theta,\varphi\colon C_2\to C_2$ such that $\theta\varphi\theta=\varphi\theta\varphi$. In addition, the tri-graph
\begin{equation}\label{diag: tri-graph monic}\xymatrix{C_2 \ar@<1.5ex>[r]^{m\theta}\ar@<-1.5ex>[r]_{m\varphi}\ar[r]|{m} & C_1 }
\end{equation}
is  a ternary relation.

Let $X$ be a set, $G=(G,\cdot,1,()^{-1})$ a group, $M=(M,\cdot,1)$ a monoid, $S=(S,\cdot,()^{-1})$ and inverse semigroup and $U=(U_i)_{i\in I}$ an open cover of a smooth manifold:
\begin{enumerate}
    \item The set $X$, if considered as a discrete groupoid, becomes an involutive-2-link as:
    \begin{enumerate}
        \item $C_1=X$, $C_2=X$, $m(x)=x$;
        \item $\theta(x)=x$, $\varphi(x)=x$.
    \end{enumerate}
    \item Whereas if the set $X$ is considered as a co-discrete groupoid, it becomes the involutive-2-link:
    \begin{enumerate}
        \item $C_1=X\times X$, $C_2=X\times X \times X$;
        \item $m(x,y,z)=(x,z)$;
        \item $\theta(x,y,z)=(y,x,z)$;
        \item $\varphi(x,y,z)=(x,z,y)$.
    \end{enumerate}
    \item If an equivalence relation $R\subseteq X\times X$ is interpreted as a groupoid then it becomes an involutive-2-link as:
    \begin{enumerate}
        \item $C_1=R$, $C_2=\{(x,y,z)\in X\times X \times X\mid xRyRz\}$;
        \item $m(x,y,z)=(x,z)$;
        \item $\theta(x,y,z)=(y,x,z)$;
        \item $\varphi(x,y,z)=(x,z,y)$.
    \end{enumerate}
    \item The \v{C}ech  groupoid associated to the open cover $U$ is obtained in the form of an involutive-2-link as:
    \begin{enumerate}
        \item $C_1=\bigsqcup U_{ij}$, $C_2=\bigsqcup U_{ijk}$
        \item $m(x_{ijk})=x_{ik}$;
        \item $\theta(x_{ijk})=x_{jik}$;
        \item $\varphi(x_{ijk})=x_{ikj}$.
    \end{enumerate}
    \item The group $G$, considered as a one object groupoid gives rise to an involutive-2-link of the form:
    \begin{enumerate}
        \item $C_1=G$, $C_2=G \times G$;
        \item $m(x,y)=xy$;
        \item $\theta(x,y)=(x^{-1},xy)$;
        \item $\varphi(x,y)=(xy,y^{-1})$.
    \end{enumerate}
    \item If $\xi\colon G\times X\to X$ is a $G$-action on the set $X$, then it can be seen as an involutive-2-link as:
    \begin{enumerate}
        \item $C_1=G\times X$, $C_2=G \times G\times X$;
        \item $m(a,b,x)=(ab,x)$;
        \item $\theta(a,b,x)=(a^{-1},ab,x)$;
        \item $\varphi(a,b,x)=(ab,b^{-1},\xi(b,x))$.
    \end{enumerate}
    \item If $h\colon G\to M$ is a homomorphism from the group $G$ to the monoid $M$ then it can be seen as an involutive-2-link as:
     \begin{enumerate}
        \item $C_1=G\times M$, $C_2=G \times G\times M$;
        \item $m(a,b,c)=(ab,c)$;
        \item $\theta(a,b,c)=(a^{-1},ab,c)$;
        \item $\varphi(a,b,c)=(ab,b^{-1},h(b)+c)$.
    \end{enumerate}
    In addition, if $M$ acts on $G$ in the fashion of a crossed-module then the involutive-2-link is an internal structure in the category of monoids. Let us nevertheless observe that although the action $\xi$ on the structure of a crossed-module is necessary to equip the set $C_1=G\times M$ with the semidirect product $G\rtimes_{\xi} M$, the underlying groupoid is defined as soon as the homomorphism $h\colon G\to M$ is specified. 
    \item The inverse semigroup $S$, if interpreted as a groupoid in the form of an involutive-2-link, becomes:
    \begin{enumerate}
        \item $C_1=S$, $C_2=\{(x,y)\in S \times S\mid x^{-1}x=yy^{-1}\}$;
        \item $m(x,y)=xy$;
        \item $\theta(x,y)=(x^{-1},xy)$;
        \item $\varphi(x,y)=(xy,y^{-1})$.
    \end{enumerate}
    \item The simplest non-trivial involutive-2-link which is not obtained from a groupoid is:
 \begin{enumerate}
        \item $C_1=\{0,1\}$, $C_2=\{1,2,3\}$;
        \item $m:[1 2 3]\mapsto [0 1 0]$;
        \item $\theta:[1 2 3]\mapsto [213]$;
        \item $\varphi:[1 2 3]\mapsto [1 3 2]$.
    \end{enumerate}
    \item One last construction is obtained by combining several examples above. Let $B$ be a set  and consider two maps $g\colon S\to B$ and  $\phi\colon B\times X\to X$ (with $S$ and $X$ as above) such that
    \begin{align}
        \phi(g(s^{-1}s),x)=x=\phi(g(ss^{-1}),x)\\
        \phi(g(s's),x)=\phi(g(s'),\phi(g(s),x)).
    \end{align}
    Every subset $R\subseteq S\times X$ satisfying the following conditions
    \begin{enumerate}
        \item[(i)] if $(s,x)\in R$ then $(s^{-1}s,x)\in R$ and $(ss^{-1},\phi(g(s),x))\in R$
        \item[(ii)] if $(s,x)\in R$ and $(s',\phi(g(s),x))\in R$ then $(s's,x)\in R$
    \end{enumerate}
    gives rise to an involutive-2-link in which:
    \begin{enumerate}
    \item $C_1=R$ while  $C_2\subseteq S\times S\times X$ consists of those triples $(s',s,x)\in S\times S\times X$ such that  $s'^{-1}s'=ss^{-1}$,  $(s,x)\in R$ and $(s',\phi(g(s),x))\in R$
    \item $m(s',s,x)=(s's,x)$
    \item $\theta(s',s,x)=(s'^{-1},s's,x)$
    \item $\varphi(s',s,x)=(s's,s^{-1},\phi(g(s),x))$.
    \end{enumerate}
    If $B$ is a monoid and $g$ is a homomorphism then we recover example (7) by putting $X=B$ and $\phi(b,x)=b+x$. On the other hand, if $g$ is a bijection  then we recover example (6). Examples (5) and (8) are recovered as well.
    \end{enumerate}

It is worthwhile noting under which conditions an involutive  magma can be seen as an involutive-2-link.  

\begin{proposition}
    Let  $m\colon X\times X \to X$ be a magma structure on the set $X$ together with an involution $i\colon X \to X$ such that $im=m\langle i\pi_2, i\pi_1\rangle$. The triple $(\theta,\varphi,m)$, with $\theta=\langle i\pi_1,m\rangle$ and $\varphi=\langle m,i\pi_2\rangle$, is an involutive-2-link if and only if:
\begin{align*}
    m(i(x),m(x,y))=y\\
    m(m(x,y),i(y))=x
\end{align*}
for all $x,y\in X$.
\end{proposition}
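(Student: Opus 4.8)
The plan is to unwind the four requirements defining an object of the category of involutive-2-links for this particular triple, where $\theta(x,y)=(i(x),xy)$ and $\varphi(x,y)=(xy,i(y))$ (writing $xy$ for $m(x,y)$), and to identify which of them are automatic from $i^{2}=1_{X}$ together with the anti-homomorphism law $im=m\langle i\pi_{2},i\pi_{1}\rangle$, and which ones force the two displayed identities.

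First I would compute $\theta^{2}$ and $\varphi^{2}$ by direct substitution, using only $i^{2}=1_{X}$: one finds $\theta^{2}(x,y)=(x,\,i(x)(xy))$ and $\varphi^{2}(x,y)=((xy)i(y),\,y)$. Hence $\theta$ is an involution exactly when $m(i(x),m(x,y))=y$ for all $x,y$, and $\varphi$ is an involution exactly when $m(m(x,y),i(y))=x$ for all $x,y$. This settles the ``only if'' direction at once: an involutive-2-link has $\theta^{2}=\varphi^{2}=1_{X\times X}$, so both identities must hold.

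For the converse I would assume the two identities, call them (I1) and (I2), and verify the remaining conditions. Joint monicity of $(m,m\theta,m\varphi)$ is immediate, since (I1) gives $m\theta=\pi_{2}$ and (I2) gives $m\varphi=\pi_{1}$, so the triple restricts to the pair of product projections on $X\times X$, which is jointly monic. The substantive point is the interlinked identity $\theta\varphi\theta=\varphi\theta\varphi$. Here I would first derive the ``mirror'' forms $x(i(x)z)=z$ and $(zi(y))y=z$ of (I1) and (I2) by replacing $x$ by $i(x)$, respectively $y$ by $i(y)$, and using $i^{2}=1_{X}$. A short computation then gives $\varphi\theta(x,y)=(y,\,i(y)i(x))$ and $\theta\varphi(x,y)=(i(y)i(x),\,x)$ --- each coordinate coming from one application of (I1), (I2) or the anti-homomorphism law --- and a second round, now using the mirror identities, yields $\theta\varphi\theta(x,y)=(i(y),i(x))=\varphi\theta\varphi(x,y)$.

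I expect the only mildly delicate step to be the bookkeeping in this last computation: since the magma need not have cancellation and the identities are not symmetric in the two slots, one has to track carefully which of the four laws --- (I1), (I2), their mirrors, or the anti-homomorphism law --- is invoked at each substitution. All the rest reduces to routine rewriting.
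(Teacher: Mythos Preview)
Your argument is correct and follows the same route as the paper's (very terse) proof: the two displayed identities are exactly the conditions $\theta^{2}=1$ and $\varphi^{2}=1$, and then the interlinked relation $\theta\varphi\theta=\varphi\theta\varphi$ is checked using the anti-homomorphism law together with (I1), (I2) and their mirrors. You supply more detail than the paper does---in particular you make explicit the joint-monicity check via $m\theta=\pi_{2}$, $m\varphi=\pi_{1}$, which the paper's proof sketch does not mention---but the underlying strategy is identical.
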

\begin{proof}
    The two conditions above are equivalent to the requirement that $\theta$ and $\varphi$ are involutions. The interlinked condition $\theta\varphi\theta=\varphi\theta\varphi$ follows from the hypoteses $i(m(x,y))=m(i(y),i(x))$. 
\end{proof}

\section{The main result}

In this section we characterize those involutive-2-links which are groupoids, i.e., that are in the image of the functor described in Proposition \ref{thm: the functor}.

Let us first observe some necessary conditions. The notion of contractible pair in the sense of Beck can be found in \cite{MacLane}, p. 150.

\begin{proposition}\label{thm: prop contract}
If an involutive-2-link $(\theta,\varphi,m)$ is the image of an internal groupoid then the pairs $(m,m\theta)$ and $(m,m\varphi)$ are jointly mono\-morphic and contractible in the sense of Beck, i.e., there exist morphisms $e_1,e_2\colon C_1\to C_2$ such that \begin{align}
    me_1=1_{C_1},\quad m\theta e_1 m=m\theta e_1 m\theta\\
    me_2=1_{C_1},\quad m\varphi e_2 m=m\varphi e_2 m\varphi.
\end{align}
\end{proposition}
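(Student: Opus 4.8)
The plan is to read off the retractions and contractions directly from the internal-groupoid structure $(\ref{diag: internal groupoid})$ that produces the given involutive-2-link. Recall from $(\ref{eq: theta and varphi 1})$ that under the functor of Proposition \ref{thm: the functor} one has $m\varphi=\pi_1$ and $m\theta=\pi_2$, with $C_2$ the pullback of $d$ and $c$. The joint monomorphy is already available: as noted in the proof of Proposition \ref{thm: the functor}, the pair $(m,m\theta)=(m,\pi_2)$ is the kernel pair of $d$ and the pair $(m,m\varphi)=(m,\pi_1)$ is the kernel pair of $c$, and kernel pairs are in particular jointly monomorphic.

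For the contractibility I would take $e_1=\langle 1_{C_1},ed\rangle$ and $e_2=\langle ec,1_{C_1}\rangle$. The first step is to check that these are well-defined morphisms into the pullback $C_2$, which amounts to the identities $d\cdot 1_{C_1}=c\cdot ed$ and $c\cdot 1_{C_1}=d\cdot ec$; both follow at once from $ce=1_{C_0}=de$. By construction $\pi_1e_1=1_{C_1}$, $\pi_2e_1=ed$, $\pi_1e_2=ec$ and $\pi_2e_2=1_{C_1}$. The equalities $me_1=1_{C_1}$ and $me_2=1_{C_1}$ are then exactly property (2) of $(\ref{diag: internal groupoid})$, namely $m\langle 1_{C_1},ed\rangle=1_{C_1}=m\langle ec,1_{C_1}\rangle$.

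It remains to verify the Beck identities. Since $m\theta=\pi_2$ and $\pi_2e_1=ed$ we get $m\theta e_1=ed$, so using the groupoid axiom $dm=d\pi_2$ one has $m\theta e_1 m=edm=ed\pi_2=m\theta e_1\,m\theta$, which is the first identity. Symmetrically $m\varphi e_2=\pi_1 e_2=ec$, and the axiom $cm=c\pi_1$ gives $m\varphi e_2 m=ecm=ec\pi_1=m\varphi e_2\,m\varphi$, the second identity. This finishes the proof.

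I do not expect any genuine obstacle here; the only point requiring a little care is that the bracket notation $\langle-,-\rangle$ is legitimate precisely because $C_2$ is a genuine pullback of $d$ and $c$, which is part of the data of an internal groupoid. Once that is granted, $e_1$ and $e_2$ are essentially forced by properties (1)--(2) of $(\ref{diag: internal groupoid})$, and the Beck conditions collapse to the compatibility axioms $dm=d\pi_2$ and $cm=c\pi_1$.
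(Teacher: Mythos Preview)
Your proof is correct and follows exactly the same approach as the paper: you choose $e_1=\langle 1_{C_1},ed\rangle$ and $e_2=\langle ec,1_{C_1}\rangle$ and identify $(m,m\theta)$, $(m,m\varphi)$ as the kernel pairs of $d$ and $c$, just as the paper does. The only difference is that you spell out the verifications (well-definedness of the brackets, the use of $dm=d\pi_2$ and $cm=c\pi_1$) that the paper leaves implicit.
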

\begin{proof}
It suffices to take $e_1=\langle 1,ed\rangle$ and $e_2=\langle ec,1\rangle$, while observing that $(m,m\theta)$ and $(m,m\varphi)$ are respectively the kernel pairs of $d$ and $c$, the domain and codomain morphisms of the groupoid from which the triple $(\theta,\varphi,m)$ is obtained as in Proposition \ref{thm: the functor}.
\end{proof}

We say that a digraph (i.e. a pair of parallel morphisms) is  bi-exact  if when considered as a span it can be completed into a commutative square which is both a pullback and a pushout and moreover, if considered as a cospan, it can be completed into another commutative square which is both a pullback and a pushout. In other words, a digraph such as 
\begin{equation}\label{diag: di-graph pi_1 pi_2}
\xymatrix{C_2 \ar@<0.5ex>[r]^{\pi_2}\ar@<-0.5ex>[r]_{\pi_1} & C_1}
\end{equation}
is bi-exact precisely when the zig-zag 
\begin{equation}\label{diag: zig-zag}
\xymatrix{
& C_2\ar[d]^{\pi_1}\\ C_2 \ar[r]_{\pi_2}\ar[d]_{\pi_1} & C_1\\ C_1 }
\end{equation}
can be completed with two commutative squares 
\begin{equation}\label{diag: zig-zag completed}
\xymatrix{ C_3\ar@{-->}[r]^{p_2} \ar@{-->}[d]_{p_1}
& C_2\ar[d]^{\pi_1}\\ C_2 \ar[r]_{\pi_2}\ar[d]_{\pi_1} & C_1 \ar@{-->}[d]^{c}\\ C_1 \ar@{-->}[r]^{d} & C_0 }
\end{equation}
which  are both simultaneously a pullback and pushout.

\begin{proposition}
If an involutive-2-link $(\theta,\varphi,m)$ is the image of an internal groupoid then the pair $(m\varphi,m\theta)$ is bi-exact.
\end{proposition}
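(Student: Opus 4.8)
The plan is to handle the two completions demanded by the definition of bi-exactness separately, after first observing that each of them is automatically available and already a pullback, so that only the pushout half of each remains. Assume $(\theta,\varphi,m)$ is the image of the internal groupoid $(\ref{diag: internal groupoid})$, so that by Proposition~\ref{thm: the functor} the object $C_2$ is the pullback of $d$ and $c$, with $m\varphi=\pi_1$ and $m\theta=\pi_2$. Completing the span $(\pi_1,\pi_2)$ by the square $(\ref{diag: d c square})$ gives a pullback by item~(1) of Section~\ref{sec: groupoids}; completing the cospan $(\pi_1,\pi_2)$ by the upper square of $(\ref{diag: zig-zag completed})$ gives, up to the evident relabeling of its legs, the pullback of item~(4), which in particular shows that $C_3$ exists and that this square is a pullback. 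It therefore remains only to show that each of these two squares is also a pushout. Notably, for this neither $m$ nor $i$ nor associativity is needed: the only ingredients are the section $e$ of the reflexive graph and the two morphisms $e_1=\langle 1_{C_1},ed\rangle$ and $e_2=\langle ec,1_{C_1}\rangle$ of Proposition~\ref{thm: prop contract}, for which $\pi_1e_1=1_{C_1}=\pi_2e_2$, $\pi_2e_1=ed$ and $\pi_1e_2=ec$.

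For the span, I would start from a cocone $u,v\colon C_1\to Z$ with $u\pi_1=v\pi_2$, precompose this identity with $e_1$ and with $e_2$ to obtain $u=ved$ and $v=uec$, and then check that $H:=ve\colon C_0\to Z$ works: $Hd=ved=u$ is immediate, $Hc=vec=v$ follows from $v=uec$ together with $dec=c$ (itself a consequence of $de=1_{C_0}$), and any $H$ with $Hd=u$ must equal $Hde=(Hd)e=ue=ve$, giving uniqueness. Thus $(\ref{diag: d c square})$ is a pushout.

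For the cospan, I would first introduce two auxiliary morphisms $\tau_1,\tau_2\colon C_2\to C_3$ determined by $p_2\tau_1=1_{C_2}$, $p_1\tau_1=e_2\pi_1$ and $p_1\tau_2=1_{C_2}$, $p_2\tau_2=e_1\pi_2$; the first thing to verify is that these pairs of composites form cones over the cospan $(\pi_1,\pi_2)$, i.e.\ that $\pi_1p_2\tau_i=\pi_2p_1\tau_i$, which holds because $\pi_2e_2=1_{C_1}$ and $\pi_1e_1=1_{C_1}$. Then, given a cocone $F,G\colon C_2\to Z$ with $Fp_2=Gp_1$, precomposition with $\tau_1$ and $\tau_2$ forces $F=Ge_2\pi_1$ and $G=Fe_1\pi_2$; the morphism $H:=Fe_1\colon C_1\to Z$ then satisfies $H\pi_2=Fe_1\pi_2=G$ at once, and $H\pi_1=F$ after substituting the second relation into the first and using $\pi_2e_2=1_{C_1}$, while uniqueness follows from $H=H(\pi_1e_1)=(H\pi_1)e_1=Fe_1$. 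Thus the upper square of $(\ref{diag: zig-zag completed})$ is a pushout, and hence $(m\varphi,m\theta)=(\pi_1,\pi_2)$ is bi-exact.

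I do not expect a genuine obstacle; the delicate point is only the bookkeeping. One must not forget to confirm that $\tau_1$ and $\tau_2$ really do land in the pullback $C_3$ before using them, and one must keep track of which of the two copies of $C_2$ — equivalently which of $\pi_1$, $\pi_2$, or of $d$, $c$ — a given leg of a cocone is being tested against, so that the morphism $H$ extracted via $e_1$ or $e_2$ is matched with the correct pair. Once the two diagram chases are written out symmetrically under the interchange $e_1\leftrightarrow e_2$, they are short and dual to one another.
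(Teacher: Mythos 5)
Your proof is correct and follows essentially the same route as the paper: both squares are pullbacks by the groupoid hypotheses (items (1) and (4), with the usual pasting identification for the upper one), and the pushout property comes from the splittings $e$, $e_1$, $e_2$ (and your induced $\tau_1,\tau_2$). Your explicit diagram chases are precisely an unwinding of the paper's one-line appeal to the fact that such split (double split epimorphism) squares are automatically pushouts.
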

\begin{proof}
If $(\theta,\varphi,m)$ is obtained from an internal groupoid then $m\varphi=\pi_1$ and $m\theta=\pi_2$ which can be completed into commutative squares as displayed in $(\ref{diag: zig-zag completed})$. The two squares are pullbacks by hypotheses. The fact that the two squares are also pushouts  is an easy consequence of the fact that they are split squares. 
\end{proof}

The following result presents the desired characterization of those involutive-2-links which are the image of internal groupoids.

\begin{theorem}\label{thm: main}
An involutive-2-link $(\theta,\varphi,m\colon C_2\to C_1)$ is an internal groupoid if and only if the pairs $(m,m\theta)$ and $(m,m\varphi)$ are jointly monomorphic, there exist two morphisms $e_1,e_2\colon C_1\to C_2$ such that
\begin{align}
    me_1=1_{C_1}=me_2\label{eq: main 1}\\
    \theta e_2=e_2,\quad \varphi e_1=e_1\label{eq: main 2}\\
    m\theta\varphi e_2=m\varphi\theta e_1\label{eq: main 3 =i}\\
    m\theta e_1 m\varphi=m\varphi e_2 m\theta\label{eq: main 4 ed=ec}\\
    m\theta e_1 m=m\theta e_1 m\theta, \quad m\varphi e_2 m=m\varphi e_2 m\varphi,\label{eq: main 5 contract} 
\end{align}
the pair $(m\varphi,m\theta)$ is bi-exact (as illustrated in diagram (\ref{diag: zig-zag completed}) with $m\varphi$ as $\pi_1$ and  $m\theta$ as $\pi_2$), and the two induced morphisms $m_1,m_2\colon C_3\to C_2$, determined by
\begin{align*}
    \pi_1m_1=mp_1,\quad \pi_2 m_1=\pi_2 p_2\\
    \pi_1 m_2=\pi_1 p_1, \quad \pi_2 m_2=m p_2
\end{align*}
are such that $mm_1=mm_2$.
\end{theorem}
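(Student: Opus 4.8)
The plan is to prove the two implications separately. \emph{Necessity.} If $(\theta,\varphi,m)$ is the image of an internal groupoid $(\ref{diag: internal groupoid})$ under Proposition~\ref{thm: the functor}, then $\pi_1=m\varphi$, $\pi_2=m\theta$ and one takes $e_1=\langle 1_{C_1},ed\rangle$, $e_2=\langle ec,1_{C_1}\rangle$. Joint monicity of $(m,m\theta)$, $(m,m\varphi)$ together with $(\ref{eq: main 5 contract})$ is Proposition~\ref{thm: prop contract}, and bi-exactness of $(m\varphi,m\theta)$ is the proposition just above the theorem. The remaining items are verified by rewriting both sides with the groupoid axioms of Section~\ref{sec: groupoids}: $(\ref{eq: main 1})$ is a unit law; for $(\ref{eq: main 2})$ one uses $\varphi=\langle m,i\pi_2\rangle$, $\theta=\langle i\pi_1,m\rangle$ and $ie=e$; for $(\ref{eq: main 3 =i})$ one first computes $\theta e_1=\langle i,1_{C_1}\rangle$, $\varphi e_2=\langle 1_{C_1},i\rangle$ and then, using $ci=d$ and a unit law, $m\varphi\theta e_1=m\langle ed,i\rangle=i=m\theta\varphi e_2$; $(\ref{eq: main 4 ed=ec})$ reduces to $ed\pi_1=ec\pi_2$, immediate from $d\pi_1=c\pi_2$; and $mm_1=mm_2$ is exactly the associativity $m(1\times m)=m(m\times 1)$, once one notes that the pullback $(\ref{diag: dpi_2 c square})$ is canonically isomorphic to the pullback of $(\pi_1,\pi_2)$ in $(\ref{diag: zig-zag completed})$ and that split squares are exact.

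\emph{Sufficiency.} Given an involutive-2-link satisfying every condition, I would reconstruct the groupoid. Put $\pi_1=m\varphi$, $\pi_2=m\theta$. Bi-exactness of $(m\varphi,m\theta)$ supplies $C_0$ with $d,c\colon C_1\to C_0$ making $(\ref{diag: d c square})$ a pullback (this is property~(1)) and $C_3$ with $p_1,p_2\colon C_3\to C_2$ making the top square of $(\ref{diag: zig-zag completed})$ a pullback. Set $i=m\varphi\theta e_1$ ($=m\theta\varphi e_2$ by $(\ref{eq: main 3 =i})$), and define $e\colon C_0\to C_1$ from the pushout property applied to $m\theta e_1$ and $m\varphi e_2$, which is legitimate because $(\ref{eq: main 4 ed=ec})$ is precisely $m\theta e_1\cdot\pi_1=m\varphi e_2\cdot\pi_2$; thus $ed=m\theta e_1=:u_d$ and $ec=m\varphi e_2=:u_c$. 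The organising device is that, $C_2$ being the pullback of $(d,c)$, every morphism $s\colon X\to C_2$ built from the data automatically satisfies $d\pi_1 s=c\pi_2 s$. Using the involution and dihedral identities one computes the $(\pi_1,\pi_2)$-coordinates $e_1=\langle 1_{C_1},u_d\rangle$, $e_2=\langle u_c,1_{C_1}\rangle$, $\theta e_1=\langle i,1_{C_1}\rangle$, $\varphi e_2=\langle 1_{C_1},i\rangle$, $\varphi\theta e_1=\langle u_d,i\rangle$, $\theta\varphi e_2=\langle i,u_c\rangle$, and applying the device to each gives $di=c$, $ci=d$, $cu_d=d$, $du_c=c$, $du_d=d$, $cu_c=c$. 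Post-composing the two relations $(\ref{eq: main 5 contract})$ with $c$, resp.\ $d$, and using $cu_d=d$, $du_c=c$, yields $dm=d\pi_2$ and $cm=c\pi_1$; and since $d,c$ are jointly epic (they form the pushout cocone), $du_d=d$, $du_c=c$, $cu_d=d$, $cu_c=c$ force $de=1_{C_0}=ce$.

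At this point the whole diagram $(\ref{diag: internal groupoid})$ is in place, and one checks the five properties of Section~\ref{sec: groupoids}: (1) is bi-exactness; $\langle 1_{C_1},ed\rangle=e_1$ and $\langle ec,1_{C_1}\rangle=e_2$ give~(2); $\langle i,1_{C_1}\rangle=\theta e_1$ and $\langle 1_{C_1},i\rangle=\varphi e_2$ give~(3) since $m\theta e_1=ed$ and $m\varphi e_2=ec$; (4), i.e.\ that $(\ref{diag: dpi_2 c square})$ is a pullback, follows by pasting the pullbacks $C_3=C_2\times_{C_1}C_2$ and $C_2=C_1\times_{C_0}C_1$; and (5), $m(1\times m)=m(m\times 1)$, is the hypothesis $mm_1=mm_2$ read through the canonical identification of the two presentations of $C_3$. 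The outstanding identities $i^2=1_{C_1}$, $ie=e$ and $im=m\langle i\pi_2,i\pi_1\rangle$ then come from uniqueness of two-sided inverses in the internal category just obtained. Finally, the functor of Proposition~\ref{thm: the functor} returns $(\langle i\pi_1,m\rangle,\langle m,i\pi_2\rangle,m)$; since $\theta$ satisfies $m\theta=\pi_2$ and $\theta^2=1_{C_2}$, a short chase with associativity, the unit laws, property~(3) and $im=m\langle i\pi_2,i\pi_1\rangle$ shows $\pi_1\theta=m\langle\pi_2,im\rangle=i\pi_1$, hence $\theta=\langle i\pi_1,m\rangle$, and dually $\varphi=\langle m,i\pi_2\rangle$; so the original triple is recovered and $(\theta,\varphi,m)$ is an internal groupoid.

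The main obstacle is the sufficiency direction and, within it, the non-circular ordering of the derivations: each structural identity ($di=c$, $dm=d\pi_2$, $de=1_{C_0}$, \dots) must be established before it is used, and the mechanism that makes this possible is the ``free'' compatibility $d\pi_1 s=c\pi_2 s$ enjoyed by every morphism $s$ into the pullback $C_2$, which turns the purely equational hypotheses $(\ref{eq: main 1})$--$(\ref{eq: main 5 contract})$ into the structural data of a groupoid; the final round-trip, that the reconstructed groupoid is carried back to $(\theta,\varphi,m)$, then rests on uniqueness of inverses in that category.
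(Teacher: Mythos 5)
Your plan is correct and follows essentially the same route as the paper, which itself only sketches the argument: the same choices $e_1=\langle 1,ed\rangle$, $e_2=\langle ec,1\rangle$ for necessity, and for sufficiency the same reconstruction of $i$ from (\ref{eq: main 3 =i}), of $e$ from the pushout property via (\ref{eq: main 4 ed=ec}) with $ed=m\theta e_1$, $ec=m\varphi e_2$, of $dm=d\pi_2$ and $cm=c\pi_1$ from (\ref{eq: main 5 contract}), and of associativity from $mm_1=mm_2$. Your explicit coordinate computations (using (\ref{eq: main 2}) and the interlink identity) and the final round-trip check that the reconstructed groupoid is sent back to the original triple $(\theta,\varphi,m)$ supply exactly the details the paper declares routine and omits, and they are sound.
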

The proof is mainly technicall checking and it involves routine calculations, so we omit the details. It is clear that if $(\theta,\varphi,m)$ is obtained from an internal groupoid then all conditions are satisfyied. On the other hand, the fact that the pairs $(m,m\theta)$ and $(m,m\varphi)$ are jointly monomorphic uniquely determines the morphisms $e_1$ and $e_2$. The involution morphism $i\colon C_1\to C_1$ is obtained by condition (\ref{eq: main 3 =i}) either as $i=m\theta\varphi e_2$ or as $i=m\varphi\theta e_1$. The morphism $e\colon C_0\to C_1$ is uniquely determined by condition (\ref{eq: main 4 ed=ec}) as such that $ed=m\theta e_1$ and $ec=m\varphi e_2$. Conditions (\ref{eq: main 5 contract}) and (\ref{eq: main 1}) assert the contractibility of the pairs $(m,m\theta)$ and $(m,m\varphi)$ as in Proposition \ref{thm: prop contract}. The condition (\ref{eq: main 2}) is a central ingredient in the proof and it gives for example $e_1e=e_2e$. The two morpphisms $m_1$ and $m_2$ are well defined because $dm=d\pi_2$ and $cm=c\pi_1$ (with $m\varphi$ as $\pi_1$ and  $m\theta$ as $\pi_2$) which follows from the conditions in the Theorem.

\section{Conclusion}
With the categorical equivalence established by Theorem \ref{thm: main} and Pro\-position  \ref{thm: the functor} between internal groupoids and suitable involutive-2-links, it is now a straightforward task to extend it to the level of 2-cells. It is also tempting to explore the possibility of applying these results to internal categories rather than internal groupoids.

 Some advantages of considering an internal groupoid as an involutive-2-link are: (i) it can be worked out in arbitrary categories, even when pullbacks are not well-behaved as for smooth manifolds and the study of Lie groupoids; (ii) it suggests a straightforward generalization of n-dimensional groupoid as for Loday's cat-n-groups; (iii) it allows a systematic study of several properties exhibited by the fibrations and pre-fibrations used in descent theory and categorical Galois theory; (iv) it provides a convenient way of comparing internal groupoids in different ambient categories by investigating whether they are reduced for example to give a crossed module. In addition, several technical results on particular important cases such as pre-ordered groupoids or topological groupoids can be unified using the common language of involutive-2-links. Finally, the category of involutive-2-links certainly deserves to be studied by itself. 

 \section*{Acknowledgement}

 This work is supported by Fundação para a Ciência e a Tecnologia FCT/MCTES (PIDDAC) through the following Projects:  Associate Laboratory ARISE LA/P/\-0112/2020; UIDP/04044/2020; UIDB/04\-044/2020; PAMI - ROTEIRO/0328/2013 (Nº 022158); MATIS (CEN\-TRO-01-0145-FEDER-000014 - 3362); Generative.Thermodynamic; by CDRSP and ESTG from the Polytechnic Institute of Leiria.

\end{document}